\newtheorem{satz}{Theorem}
\newtheorem{theorem}[satz]{Theorem}
\newtheorem{lemma}[satz]{Lemma}
\newtheorem{corollary}[satz]{Corollary}
\newtheorem{remark}[satz]{Remark}
\def\Z{\mathbb {Z}}
\def\F{\mathbb {F}}
\def\a{\alpha}
\def\S{{ S_{\alpha}}}
\def\C{\mathbb{C}}
\def\d{\delta}
\def\o{\omega}
\def\({\big (}
\def\){\big )}
\def\G{\Gamma}
\def\le{\leqslant}
\def\ge{\geqslant}
\def\_phi{\varphi}
\def\FF{\widehat}
\def\ov{\overline}
\def\S{{\mathcal S}}
\def\R{{\mathbf R}}
\def\Vol{\mathsf{Vol}}
\author{Shkredov I.D.}
\title{ On some problems of Euclidean Ramsey theory
\footnote{
This work was supported by grant
Russian Scientific Foundation RSF 14--11--00433.}
}
\date{}
\begin{document}
\maketitle

\begin{center}
 Annotation.
\end{center}

{\it \small
    In the paper
    we prove, in particular, that for any measurable coloring of the euclidian plane into two colours
    there is a
    monochromatic triangle with some restrictions on the sides.
    Also we consider similar problems in finite fields settings.
}
\\


\section{Introduction}
\label{sec:introduction}

Let $\Pi = \R^2$ be the ordinary euclidian plane.
Any partition of $\Pi$ onto $k$ disjoint sets $C_1, \dots, C_k$ is called {\it $k$--coloring} of $\Pi$
and the sets $C_1, \dots, C_k$ are called {\it colors.}
A well--known unsolved
question
of Euclidian Ramsey Theory (see \cite{EGMRSS}, \cite{Colouring_book}) asks us about the existence of a monochromatic
(that is belonging to
the same
color) non--equilateral triangle (that is just any three points from $\Pi$) in any two--coloring of
the plane.
The
problem
seems to be difficult and only partial results are known, see \cite{Colouring_book}.
In particular,
the question remains open
even for the case of a degenerate triangle, having all three  points lying on a line.
A parallel and even more famous problem in the area is to find the {\it chromatic number} of the plane $\chi (\R^2)$,
that is the smallest number of colors sufficient for coloring the plane in such
a way that no two points of the same color are unit distance apart.
It is well--known that $4 \le \chi (\R^2) \le 7$.
In his beautiful paper \cite{F_chromatic} Falconer proved that if all colors are {\it measurable} sets then the correspondent {\it measurable chromatic number} of the plane is at least five.
Our paper is devoted to a measurable analog of the considered two--coloring problem.
The main
result is
the following, see Theorem \ref{t:euclid} and Theorem \ref{t:euclid'}
from section \ref{sec:euclid}.

\begin{theorem}
    Let $ABC$ be a nondegenarate triangle such that $|AB|/|AC| = \omega$.
    Suppose that
$$
    \min_{t\ge 0} (J_0 (t) + J_0 (\omega t)) \ge -0.5972406 \,,
$$
    where $J_0$ is the zeroth Bessel function.
    Then any measurable coloring of $\R^2$ into two colors contains a monochromatic triangle.\\
    Further, if
$$
        \min_{t\ge 0} (J_0 (t) + J_0 (\kappa t) + J_0 ((1+\kappa)t)) > - 1 \,.
$$
    Then for any measurable coloring of the plane $\Pi$ into two colors
    there is a monochromatic collinear triple $\{ x, y, z\}$ such that $y \in [x,z]$ and
    $\| z - y\| / \| y-x\| = \kappa$.
\label{t:main_intr}
\end{theorem}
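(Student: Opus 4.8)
The plan is to deduce both assertions from a single spectral inequality for one colour class, and then feed in the two hypotheses separately; the triangle claim will turn out to be the same computation as the collinear one, with the third side controlled by a universal constant, which is why only the ratio $\omega$ enters. Fix a measurable two--colouring of $\R^2$, let $f$ be the indicator of one colour (so $1-f$ is the other), and average the translates of $f$ over large balls, passing if necessary to a subsequence along which the empirical autocorrelations converge. This produces a density $\delta=\langle f\rangle\in[0,1]$ and, by Bochner's theorem applied to the (positive--definite) limiting autocorrelation, a positive ``diffraction'' measure $\rho$ on $\R^2$ with
\[
\langle f(x)\,(f*\sigma_r)(x)\rangle=\int_{\R^2}J_0(2\pi r|\xi|)\,d\rho(\xi),\qquad \widehat{\sigma_r}(\xi)=J_0(2\pi r|\xi|),
\]
where $\sigma_r$ is normalised arclength on the circle of radius $r$. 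Here $\rho\ge0$ has total mass $\langle f^2\rangle=\delta$ and atom $\rho(\{0\})=\delta^2$ at the origin, so its restriction $\rho'$ to $\R^2\setminus\{0\}$ is positive of mass $\delta-\delta^2=:p\le 1/4$.

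The decisive elementary step is that for three points the monochromaticity indicator
\[
M:=f(X)f(Y)f(Z)+(1-f(X))(1-f(Y))(1-f(Z))=1-\textstyle\sum f+\sum f f
\]
has its cubic term cancel, so the expected number of monochromatic copies depends only on pairwise two--point functions. I would then average a rigidly moving copy of the configuration over all rotations and translations. The key observation is that under a rotation each edge vector sweeps out the full circle of radius equal to \emph{its own length}; hence every one of the three pairwise correlations collapses to a single Bessel integral and, with side lengths $r_1,r_2,r_3$,
\[
\langle M\rangle=1-3\delta+\int_{\R^2}\big(J_0(2\pi r_1|\xi|)+J_0(2\pi r_2|\xi|)+J_0(2\pi r_3|\xi|)\big)\,d\rho(\xi)=1-3p+\int_{\xi\neq0}\Sigma(\xi)\,d\rho'(\xi),
\]
where $\Sigma$ is the bracketed sum of three Bessel terms. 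Since $M\ge0$ pointwise, a strictly positive value of $\langle M\rangle$ forces a genuine monochromatic copy to exist at some finite scale.

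For the collinear triple the sides are $r_1,\,r_2=\kappa r_1,\,r_3=(1+\kappa)r_1$, so after rescaling the frequency $\Sigma(\xi)\ge\mu:=\min_{t\ge0}(J_0(t)+J_0(\kappa t)+J_0((1+\kappa)t))$, and since $\rho'\ge0$ has mass $p\le1/4$,
\[
\langle M\rangle\ge 1-(3-\mu)p\ge 1-\tfrac{3-\mu}{4},
\]
which is positive precisely when $\mu>-1$: this is the second hypothesis. For the triangle I keep the two edges issuing from $A$ (lengths in ratio $\omega$) inside the hypothesis and spend the third Bessel cheaply, using that for every frequency
\[
\Sigma(\xi)\ge \min_{t\ge0}\big(J_0(t)+J_0(\omega t)\big)+\min_{s\ge0}J_0(s),\qquad \min_{s\ge0}J_0(s)=-0.4027594\ldots
\]
is the global minimum of $J_0$. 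The assumption $\min_{t\ge0}(J_0(t)+J_0(\omega t))\ge-0.5972406$ together with $-0.5972406-0.4027594=-1$ gives $\Sigma\ge-1$, hence $\langle M\rangle\ge1-\tfrac{3-(-1)}{4}=0$, and a monochromatic congruent copy of $ABC$.

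The hard part will be two matters of rigour rather than computation. The first is the passage from an arbitrary, possibly aperiodic, measurable colouring to the single positive measure $\rho$ with the prescribed total mass $\delta$ and atom $\delta^2$, and back: one must secure existence of the autocorrelation limit (weak--$*$ compactness along a subsequence), positivity of $\rho$ (Bochner), and the descent from average positivity to an actual configuration. The second is the boundary case of the triangle bound, where $\Sigma\ge-1$ alone only yields $\langle M\rangle\ge0$; here I would use that $J_0(2\pi r_3|\xi|)$ attains its minimum only on a discrete frequency set, generically disjoint from the minimiser of $J_0(t)+J_0(\omega t)$, so that the inequality for $\Sigma$ is in fact strict and strict positivity of $\langle M\rangle$ survives.
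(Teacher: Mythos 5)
Your core computation is the paper's own, arrived at from a slightly different direction. The identity you exploit --- that $M=f(X)f(Y)f(Z)+(1-f(X))(1-f(Y))(1-f(Z))$ has no cubic term --- is exactly the paper's key cancellation $\sigma(f_{A_*})+\sigma(f_{B_*})=0$ for the balanced functions ($f_A=-f_B$); the reduction of each pairwise correlation to a Bessel integral against a positive spectral weight, the mass bound $\delta-\delta^2\le 1/4$, and the final bound $(1+\mu)/4>0$ are all identical to the paper's Theorems \ref{t:euclid} and \ref{t:euclid'}. Where you genuinely differ is the analytic setup: the paper never invokes Bochner's theorem or a limiting diffraction measure. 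Instead it first \emph{periodizes} the coloring --- tiling $\R^2$ by translates of $A\cap[-T+(1+\kappa)a,T-(1+\kappa)a]^2$ and $B\cap[-T+(1+\kappa)a,T-(1+\kappa)a]^2$ along the lattice $2T\Z^2$, the shrinking of the box guaranteeing that no new configurations are created --- so that the Fourier spectrum is discrete, Parseval applies directly, and everything you list as ``the hard part'' (existence of the autocorrelation limit, positivity, the atom at the origin) simply never arises. Your route can be made rigorous (and note only $\rho(\{0\})\ge\delta^2$ is needed, which is the easy inequality), but the periodization trick buys the same conclusion with far less functional-analytic overhead. A second point of comparison: your rotation-averaging of the rigid triangle automatically makes the third edge sweep a circle of radius $|BC|$, which is the paper's sharper condition (\ref{c:euclid''}) obtained via Lemma \ref{l:g-I}; your ``spend the third Bessel cheaply'' step then recovers the stated hypothesis exactly as the paper's proof of Theorem \ref{t:euclid'} does.

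The one genuine defect in your write-up is self-inflicted: the ``boundary case'' of the triangle bound does not exist. You rounded $\min_{s\ge 0}J_0(s)$ to $-0.4027594$, which makes $-0.5972406-0.4027594=-1$ exactly; but the true value is $-0.4027593957\ldots$, so the hypothesis $\min_{t\ge0}(J_0(t)+J_0(\omega t))\ge -0.5972406$ gives
$$
\Sigma(\xi)\ \ge\ -0.5972406-0.4027593957\ldots\ =\ -0.9999999957\ldots\ >\ -1
$$
for every frequency, hence $\langle M\rangle\ge(1+\min\Sigma)/4>0$ strictly. The threshold $-0.5972406$ in the statement was chosen precisely to clear $-1-\min J_0=-0.5972406043\ldots$. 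You should therefore delete the genericity patch at the end: as written it is not a proof (nothing prevents the spectral measure from charging the single bad frequency, so ``generically disjoint'' has no force), and fortunately it is not needed once the arithmetic is done to enough decimal places.
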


The proof uses simple Fourier analysis in spirit of paper \cite{OFV}
and hugely relies on the fact that we
have deal
just
with two colors.
Also we consider a model situation of the plane over the prime finite field $\F_p \times \F_p$ and prove
(a slightly stronger) analog of Theorem \ref{t:main_intr}, see section \ref{sec:finite}.
The proof develops the method from \cite{BHIPR}, \cite{IK}, \cite{Wolf_AP4}.

The author is grateful to R. Prasolov and J. Wolf for useful discussions.

\section{Finite fields case}
\label{sec:finite}

Let $p$ be a prime number, and $\F_p$ be the prime field.
Let also $\Pi = \F_p \times \F_p$ be the prime plane.
If $x\in \Pi$ then we write $x=(x_1,x_2)$.
For any $j\neq 0$ define a {\it sphere} in $\Pi$, that is the set
$$
    \S_j = \{ x\in \Pi ~:~ \| x \| := x_1^2 + x_2^2 = j \} \,.
$$
For any function $f: \Pi \to \C$ denote its Fourier transform as
$$
    \FF{f} (r) := \sum_{x\in \Pi} f(x) e^{-\frac{2 \pi i (x_1 r_1 + x_2 r_2 )}{p}}
        =
            \sum_{x\in \Pi} f(x) e^{-\frac{2 \pi i \langle x, r\rangle}{p}}
                =
                    \sum_{x\in \Pi} f(x) e(-\langle x, r\rangle) \,.
$$
The inverse formula takes place
\begin{equation}\label{f:inverse}
    f(x) = p^{-2} \sum_{r \in \Pi} \FF{f}(r) e(\langle r, x \rangle) \,.
\end{equation}
For any
two functions $f,g : \Pi \to \C$ the Parseval identity holds
\begin{equation}\label{f:Parseval}
    \sum_{x \in \Pi} f(x) \ov{g(x)} = p^{-2} \sum_{r \in \Pi} \FF{f} (r) \ov{\FF{g}(r)} \,.
\end{equation}
Further, put
$$
    (f*g) (x) := \sum_{y\in \Pi} f(y) g(x-y) \quad \mbox{ and } \quad
        (f\circ g) (x) := \sum_{y\in \Pi} f(y) g(y+x) \,.
$$
Then
\begin{equation}\label{f:F_svertka}
    \FF{f*g} = \FF{f} \FF{g} \quad \mbox{ and } \quad \FF{f \circ g} = \FF{f^c} \FF{g} = \ov{\FF{\ov{f}}} \FF{g} \,,
\end{equation}
where for a function $f:\Pi \to \mathbb{C}$ we put $f^c (x):= f(-x)$.
 Clearly,  $(f*g) (x) = (g*f) (x)$ and $(f\circ g)(x) = (g \circ f) (-x)$, $x\in \Pi$.
If $A\subseteq \Pi$ is a set then denote by $A(x)$ its characteristic function.

\bigskip

Using Gauss and Kloosteman sums one can prove the following rather standard lemma, see e.g.
\cite{IK}.
Exact formula for the cardinalities of the spheres in $\Pi$ can be obtained as well.

\bigskip

\begin{lemma}
    We have
\begin{equation}\label{f:sheres_Fourier_1}
    |\S_j| = p + 2 \theta \sqrt{p} \,,
\end{equation}
    where $|\theta| \le 1$,
    and for all $r\neq 0$ one has
\begin{equation}\label{f:sheres_Fourier_2}
    |\FF{\S}_j (r)| \le 2 \sqrt{p} \,.
\end{equation}
    Moreover, for any invertible  $\mathbf{g} :\Pi \to \Pi$ and all $r\neq 0$
    the following holds
\begin{equation}\label{f:sheres_Fourier_3}
    |\FF{\mathbf{g} (\S_j)} (r)| \le 2 \sqrt{p} \,.
\end{equation}
\label{l:sheres_Fourier}
\end{lemma}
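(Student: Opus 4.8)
The plan is to detect the defining quadratic condition $\|x\|=j$ by additive characters and to reduce everything to classical Gauss and Kloosterman sums, exactly in the spirit of \cite{IK}. Throughout write $e(t)=e^{2\pi i t/p}$ and let $G=\sum_{t\in\F_p} e(t^2)$ be the quadratic Gauss sum, for which $|G|=\sqrt p$ and $G^2=\left(\frac{-1}{p}\right)p$. The orthogonality relation $\frac1p\sum_{s\in\F_p} e(sm)=1$ if $m\equiv 0$ and $0$ otherwise lets me write the indicator of $\S_j$ as $\S_j(x)=\frac1p\sum_{s\in\F_p} e(s(\|x\|-j))$, and this single identity drives all three parts.

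For the cardinality \eqref{f:sheres_Fourier_1} I would insert this identity into $|\S_j|=\sum_x \S_j(x)$ and split off the term $s=0$, which contributes $p$. For $s\neq 0$ the sum over $x=(x_1,x_2)$ factors into two one--dimensional Gauss sums $\sum_{x_k} e(sx_k^2)=\left(\frac{s}{p}\right)G$, whose product is $\left(\frac{s}{p}\right)^2 G^2=G^2$; summing $e(-sj)$ over $s\neq 0$ gives $-1$ because $j\neq 0$. Hence $|\S_j|=p-\left(\frac{-1}{p}\right)=p\pm 1$, and writing this as $p+2\theta\sqrt p$ forces $|\theta|\le 1$.

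For \eqref{f:sheres_Fourier_2} I apply the same expansion to $\FF{\S}_j(r)=\sum_x \S_j(x)e(-\langle x,r\rangle)$. The term $s=0$ vanishes for $r\neq 0$ by orthogonality in $x$, and for $s\neq 0$ completing the square in each coordinate, $sx_k^2-r_kx_k=s(x_k-(2s)^{-1}r_k)^2-(4s)^{-1}r_k^2$, again produces the factor $\left(\frac{s}{p}\right)^2 G^2=G^2$ together with a phase $e(-(4s)^{-1}r_k^2)$. This yields $\FF{\S}_j(r)=\frac{G^2}{p}\sum_{s\neq 0} e(-sj-(4s)^{-1}\|r\|)$, a Kloosterman sum up to the unimodular factor $G^2/p$. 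The main, and only nonelementary, obstacle is the bound on this sum: since $j\neq 0$ one of the Kloosterman parameters is nonzero, so Weil's estimate gives absolute value $\le 2\sqrt p$ when $\|r\|\not\equiv 0$, while if $\|r\|\equiv 0$ the sum collapses to $\sum_{s\neq 0} e(-sj)=-1$. Either way $|\FF{\S}_j(r)|\le 2\sqrt p$.

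Finally \eqref{f:sheres_Fourier_3} reduces to \eqref{f:sheres_Fourier_2} by a change of variables. Writing the invertible map as $\mathbf{g}(x)=Lx+v$ with $L$ invertible, I have $\FF{\mathbf{g}(\S_j)}(r)=e(-\langle v,r\rangle)\sum_{x\in\S_j} e(-\langle x,L^\top r\rangle)=e(-\langle v,r\rangle)\FF{\S}_j(L^\top r)$. Since $L$ is invertible and $r\neq 0$ we have $L^\top r\neq 0$, and the extra factor is unimodular, so the bound $2\sqrt p$ transfers verbatim from part \eqref{f:sheres_Fourier_2}.
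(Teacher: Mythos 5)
Your proof is correct and follows essentially the same route as the paper's: both detect the condition $\|x\|=j$ by additive characters, complete the square to produce Gauss sums, and invoke Weil's bound \cite{Weil} for the resulting Kloosterman sum. The only difference is organizational: the paper carries the affine map $\mathbf{g}$ through the whole exponential--sum computation to prove (\ref{f:sheres_Fourier_3}) directly, citing \cite{IK} for (\ref{f:sheres_Fourier_1}) and (\ref{f:sheres_Fourier_2}), whereas you prove (\ref{f:sheres_Fourier_2}) in full and deduce (\ref{f:sheres_Fourier_3}) from it via the identity $\FF{\mathbf{g}(\S_j)}(r)=e(-\langle v,r\rangle)\,\FF{\S}_j(L^{\top}r)$ --- the same computation in a cleaner modular order, with the minor bonus that you treat explicitly the degenerate case $\|r\|\equiv 0 \pmod p$, which the paper leaves implicit.
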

\begin{proof}
    We will prove just (\ref{f:sheres_Fourier_3}), the proof of (\ref{f:sheres_Fourier_1}), (\ref{f:sheres_Fourier_2})
    is similar and is contained in \cite{IK}, Lemma 2.
    Put $\S = \S_j$.
    We have
$$
    \FF{\mathbf{g} (\S)} (r) = \sum_x \S (\mathbf{g}^{-1} x) e(-\langle x, r\rangle)
        =
            \sum_x \S (x) e(-\langle \mathbf{g} (x), r\rangle)
                =
                    p^{-1} \sum_{k\in \F_p} \sum_x e(k (\| x\| - j) -\langle \mathbf{g} (x), r\rangle)
$$
$$
    = p^{-1} \sum_{k \neq 0} e(-kj) \sum_x e(k \| x\| -\langle \mathbf{g} (x), r\rangle)
        =
            p^{-1} \sum_{k \neq 0} e(-kj) \sum_x e(k \| x\| - a x_1 - b x_2) \,,
$$
where $a,b\in \F_p$ are some constants depending of $r$.
Completing the square and using the well--known formula
$$
    G(\a) := \sum_z e(\a z^2) = \left( \frac{\a}{p} \right) G (1) \,,
$$
where $G (1) =\sum_z e(z^2)$ is the Gauss sum and $\left( \frac{\a}{p} \right)$ is the Legendre symbol, we obtain
$$
    \FF{\mathbf{g} (\S)} (r) = \frac{G^2 (1)}{p} \sum_{k \neq 0} e(-kj - c k^{-1}) \,,
$$
where $c$ is some constant.
Now applying $|G(1)| = \sqrt{p}$ and the estimate for the Kloosterman sums \cite{Weil}, we get
$$
    |\sum_{k \neq 0} e(-kj - c k^{-1})| \le 2 \sqrt{p} \,.
$$
This completes the proof.
$\hfill\Box$
\end{proof}

\bigskip

For any set $A\subseteq \Pi$ denote by $f_A (x)$ the {\it balanced function} of the set $A$, that is
$f_A (x) = A(x) - |A|/|\Pi|$.
Clearly, $\sum_x f_A (x) = 0$.
By $I : \Pi \to \Pi$ denote the identity map.

\begin{theorem}
    Let $p$ be a sufficiently large prime number.
    Suppose that $\mathbf{g}$ is an invertible
    affine
    transformation of $\Pi$ such that $\mathbf{g} - I$ is also invertible.
    Then for any two--coloring of the plane $\Pi$ and any $a\neq 0$ there is a monochromatic
    triple $\{ x, y, z\}$ such that $y=x+s$, $s\in \S_a$ and $z=x+\mathbf{g} (s)$.
\label{t:F_p_2}
\end{theorem}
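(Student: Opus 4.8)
The plan is to estimate the number of monochromatic configurations of the prescribed shape and to show it exceeds the number of degenerate ones. Denote the two colours by $A$ and $B := \Pi \s A$, put $\a := |A|/p^2$, $\b := 1-\a$, and let $f := f_A$ be the (real--valued) balanced function, so that $A(x) = \a + f(x)$, $B(x) = \b - f(x)$, $\FF{f}(0) = 0$ and $\sum_x f^2(x) = \a\b p^2$. The object of study is
$$
    N := \sum_x \sum_{s\in\S_a} \left( A(x) A(x+s) A(x+\mathbf{g}(s)) + B(x) B(x+s) B(x+\mathbf{g}(s)) \right) \,,
$$
the number of ordered monochromatic triples $(x, x+s, x+\mathbf{g}(s))$ with $s\in\S_a$. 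Substituting the expressions for $A$ and $B$ and expanding, the terms linear in $f$ vanish because $\sum_x f = 0$ while the inner sum over $s\in\S_a$ is translation invariant; more importantly, the two cubic terms $\pm\sum_{x,s} f(x) f(x+s) f(x+\mathbf{g}(s))$ cancel, which is the feature special to the case of two colours. What survives is $N = (\a^3 + \b^3) p^2 |\S_a| + Q_1 + Q_2 + Q_3$, where $Q_1 = \sum_{x, s\in\S_a} f(x) f(x+s)$, $Q_2 = \sum_{x, s\in\S_a} f(x) f(x+\mathbf{g}(s))$ and $Q_3 = \sum_{x, s\in\S_a} f(x+s) f(x+\mathbf{g}(s))$. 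Since $\a + \b = 1$ we have $\a^3 + \b^3 \ge 1/4$, so the main term is at least $\frac14 p^2 |\S_a|$.

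It remains to bound the three correlations. Writing $(f\circ f)(t) = \sum_x f(x) f(x+t)$ and performing in $Q_3$ the change of variables $y = x+s$, together with the identity $\mathbf{g}(s) - s = (\mathbf{g} - I)(s)$, one sees that
$$
    Q_1 = \sum_{t\in\S_a} (f\circ f)(t) \,, \qquad Q_2 = \sum_{t\in\mathbf{g}(\S_a)} (f\circ f)(t) \,, \qquad Q_3 = \sum_{t\in(\mathbf{g}-I)(\S_a)} (f\circ f)(t) \,;
$$
here the hypotheses that $\mathbf{g}$ and $\mathbf{g} - I$ are invertible guarantee that $\mathbf{g}(\S_a)$ and $(\mathbf{g}-I)(\S_a)$ are genuine affine images of the sphere. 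By Parseval (\ref{f:Parseval}), the identity $\FF{f\circ f} = |\FF{f}|^2$ coming from (\ref{f:F_svertka}) (valid since $f$ is real), and the vanishing of the $r = 0$ term as $\FF{f}(0) = 0$, we get for each $T \in \{\S_a, \mathbf{g}(\S_a), (\mathbf{g}-I)(\S_a)\}$
$$
    \sum_{t\in T} (f\circ f)(t) = p^{-2} \sum_{r\neq 0} \FF{T}(r) |\FF{f}(r)|^2 \,.
$$
Now Lemma~\ref{l:sheres_Fourier}, applied through (\ref{f:sheres_Fourier_2}) to $\S_a$ and through (\ref{f:sheres_Fourier_3}) to the invertible maps $\mathbf{g}$ and $\mathbf{g} - I$, bounds $|\FF{T}(r)| \le 2\sqrt p$ for $r\neq 0$; combined with $\sum_r |\FF{f}(r)|^2 = \a\b p^4 \le \frac14 p^4$ this yields $|Q_i| \le 2\a\b p^{5/2} \le \frac12 p^{5/2}$ for each $i$.

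Collecting the bounds and using $|\S_a| \ge p - 2\sqrt p$ from (\ref{f:sheres_Fourier_1}), one obtains $N \ge \frac14 p^2 (p - 2\sqrt p) - \frac32 p^{5/2} = \frac14 p^3 - 2 p^{5/2}$. Finally I would rule out degenerate triples: $x = x+s$ is impossible since $s\in\S_a$ forces $s\neq 0$; $x = x + \mathbf{g}(s)$ forces $\mathbf{g}(s) = 0$, which has a single solution $s$ as $\mathbf{g}$ is invertible; and $x+s = x + \mathbf{g}(s)$ forces $(\mathbf{g} - I)(s) = 0$, again a single $s$. Each degeneracy accounts for at most $p^2$ of the configurations counted by $N$, so the number of genuine monochromatic triples is at least $\frac14 p^3 - 2p^{5/2} - 2p^2$, which is positive for all sufficiently large $p$. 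The main obstacle is the algebraic bookkeeping of the expansion: one must verify that the cubic terms cancel (this is where two colours are essential) and recognise that the three pairwise difference sets are exactly the images of $\S_a$ under $I$, $\mathbf{g}$ and $\mathbf{g} - I$, which is precisely what makes both invertibility hypotheses necessary and lets Lemma~\ref{l:sheres_Fourier} control all three error terms uniformly.
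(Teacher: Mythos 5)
Your proposal is correct and takes essentially the same route as the paper's own proof: the same balanced--function expansion in which the two cubic terms cancel thanks to $f_B=-f_A$ (the two--colour feature), the same Parseval computation reducing the three quadratic terms to Fourier coefficients of $\S_a$, $\mathbf{g}(\S_a)$ and $(\mathbf{g}-I)(\S_a)$ bounded by Lemma \ref{l:sheres_Fourier}, and the same optimization $\a^3+\b^3\ge 1/4$. The only addition is your explicit exclusion of degenerate triples, a point the paper passes over silently.
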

\begin{proof}
    Let $\S = \S_a$ and $A,B$ be the colors of our coloring.
    We are interested into the quantity
$$
    \sigma (A) := \sum_{x} \sum_{s\in \S} A(x) A(x+s) A(x+\mathbf{g} (s)) \,,
$$
    and similar for the color $B$.
    Let us rewrite the quantity $\sigma (A)$ in terms of the balanced function of $A$.
    Put $\d_A = |A|/|\Pi|$, $\d_B = |B| / |\Pi|$.
    Then because of the balanced function has zero mean, we get
$$
    \sigma (A) := \sum_{x} \sum_{s\in \S} (\d_A + f_A) (x) (\d_A + f_A) (x+s) (\d_A + f_A) (x+\mathbf{g} (s))
        =
$$
$$
        =
            \d^3_A |\S| p^2
                +
                    \d_A ( \sum_{x} \sum_{s\in \S} f_A (x) f_A (x+s)
                        + \sum_{x} \sum_{s\in \S} f_A (x) f_A (x+\mathbf{g} (s))
                            +
$$
$$
                            +
                                \sum_{x} \sum_{s\in \S} f_A (x+\mathbf{g} (s)) f_A (x+s) )
                                    +
                                        \sum_{x} \sum_{s\in \S} f_A (x) f_A (x+s) f_A (x+\mathbf{g} (s))
                                        =
$$
\begin{equation}\label{tmp:30.06.2015_1}
    =
        \d^3_A |\S| p^2 + \d_A (\sigma_1 + \sigma'_1 + \sigma''_1) + \sigma_2 \,.
\end{equation}
Let us estimate $\sigma_1$.
By formulas (\ref{f:Parseval}), (\ref{f:F_svertka}), we obtain
$$
    \sigma_1 = \sum_s \S(s) (f_A \circ f_A) (s)
     = p^{-2} \sum_r \FF{\S} (r) |\FF{f}_A (r)|^2
$$
and thus, using the Parseval identity once more time
as well as
Lemma \ref{l:sheres_Fourier}, we get
$$
    \sigma_1 = p^{-2} \sum_{r\neq 0} \FF{\S} (r) |\FF{f}_A (r)|^2
        \le
            2 \sqrt{p} \cdot p^{-2} \sum_{r} |\FF{f}_A (r)|^2
            \le
            2 \sqrt{p} |A| \,.
$$
So, it is negligible comparing the main term in (\ref{tmp:30.06.2015_1}).
Now by the invertibility of $\mathbf{g}$, we have
$$
    \sigma'_1 = \sum_s \S(s) (f_A \circ f_A) (\mathbf{g} (s)) =
    \sum_s \S(\mathbf{g}^{-1} (s) ) (f_A \circ f_A) (s)
$$
and we can apply the arguments above because of
one can
use
bound
(\ref{f:sheres_Fourier_3})  of Lemma \ref{l:sheres_Fourier}
instead of (\ref{f:sheres_Fourier_2}).
Finally
$$
    \sigma''_1 = \sum_s \S(s) (f_A \circ f_A) (\mathbf{g} (s) - s)
        =
            \sum_s \S((\mathbf{g} - I)^{-1} (s) ) (f_A \circ f_A) (s)
$$
and by the invertibility of $\mathbf{g} - I$
and in view of
Lemma \ref{l:sheres_Fourier}
we can estimate $\sigma''_1$ similarly as $\sigma'_1$.

It remains to
calculate the quantity $\sigma_2$.
Using
the inverse formula  (\ref{f:inverse})
it is easy to see that
$$
    \sigma_2 = \sigma_2 (A) = p^{-4} \sum_{u,v} \FF{f}_A (-u-v) \FF{f}_A (u) \FF{f}_A (v)
    \cdot \left( \sum_s \S (s) e(\langle s, u \rangle + \langle \mathbf{g} (s), v \rangle) \right) \,.
$$
Because of $A(x)+B(x) = 1$ we have $\FF{f}_A (r) = - \FF{f}_B (r)$ for all $r\in \Pi$.
It follows that $\sigma_2 (A) + \sigma_2 (B) = 0$.
Another way to see the fact is to check the identity $f_A (x) + f_B (x) = 0$.
Whence, using Lemma \ref{l:sheres_Fourier} again, we obtain
$$
    \sigma (A) + \sigma (B) \ge |\S| p^2 (\d^3_A + \d^3_B) - 6 \sqrt{p} |A| - 6 \sqrt{p} |B|
        \ge
            \frac{|\S| p^2}{4} - 6 p^2 \sqrt{p}
                \ge
                    \frac{p^3}{4} - 6.5 p^2 \sqrt{p} > 0 \,,
$$
provided by $p>1000$, say.
This completes the proof.
$\hfill\Box$
\end{proof}

\bigskip

Because of
two
distinct
points of $\Pi$ can be transformed to another
pair
by a composition of an orthogonal transformation and a dilation (see e.g. \cite{BHIPR}, \cite{HI_simplexes}) then we obtain two immediate
consequences of the theorem above.

\begin{corollary}
    Let $p$ be a sufficiently large prime number, $p\equiv -1 \pmod 4$,
    and three points $A,B,C \in \Pi$ form a non--equilateral triangle.
    Then for any two--coloring of the plane $\Pi$ there is a
    monochromatic triangle
    congruent to $\triangle ABC$.
\label{c:finite1}
\end{corollary}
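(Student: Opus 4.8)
The plan is to deduce the statement from Theorem~\ref{t:F_p_2} by engineering a linear map $\mathbf{g}$ whose associated triples $\{x,\,x+s,\,x+\mathbf{g}(s)\}$, $s\in\S_a$, \emph{all} carry the same three squared side lengths as $\triangle ABC$, and then to promote "equal distances'' to "congruent'' by finite--field rigidity. After a translation I may assume $A=0$ and set $u=B-A$, $v=C-A$, so the three squared sides of $\triangle ABC$ are $\|u\|$, $\|v\|$ and $\|v-u\|$. Since $p\equiv -1\pmod 4$, the form $\|\cdot\|$ is anisotropic, so distinct points lie at nonzero distance; in particular $\|u\|,\|v\|,\|v-u\|$ are all nonzero. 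I put $a=\|u\|$.

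First I would produce $\mathbf{g}$ as a rotation--dilation
$$
\mathbf{g}=\begin{pmatrix}\alpha & -\beta\\ \beta & \alpha\end{pmatrix},
$$
choosing $(\alpha,\beta)$ so that $\mathbf{g}(u)=v$. This amounts to the linear system with coefficient matrix $\left(\begin{smallmatrix} u_1 & -u_2\\ u_2 & u_1\end{smallmatrix}\right)$, whose determinant is $u_1^2+u_2^2=\|u\|=a\neq 0$; hence $(\alpha,\beta)\in\F_p^2$ exists and is unique, and no square--root extraction is needed. Every matrix of this shape is a similarity, $\|\mathbf{g}(s)\|=(\alpha^2+\beta^2)\|s\|$, and $\mathbf{g}-I$ has the same shape, so $\|(\mathbf{g}-I)(s)\|=((\alpha-1)^2+\beta^2)\|s\|$. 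Evaluating the first identity at $s=u$ gives $(\alpha^2+\beta^2)a=\|v\|$, whence $\|\mathbf{g}(s)\|=\|v\|$ for every $s\in\S_a$; evaluating the second at $s=u$ and using $(\mathbf{g}-I)(u)=v-u$ gives $\|(\mathbf{g}-I)(s)\|=\|v-u\|$ for every $s\in\S_a$. In particular $\det\mathbf{g}=\alpha^2+\beta^2=\|v\|/a\neq 0$ and $\det(\mathbf{g}-I)=(\alpha-1)^2+\beta^2=\|v-u\|/a\neq 0$, so $\mathbf{g}$ and $\mathbf{g}-I$ are both invertible, which are exactly the hypotheses of Theorem~\ref{t:F_p_2}.

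Applying Theorem~\ref{t:F_p_2} to this $\mathbf{g}$ and this $a$, for any two--coloring I obtain a monochromatic triple $\{x,\,x+s,\,x+\mathbf{g}(s)\}$ with $s\in\S_a$. By the computation above its three squared side lengths are $\|s\|=a=\|u\|$, $\|\mathbf{g}(s)\|=\|v\|$ and $\|(\mathbf{g}-I)(s)\|=\|v-u\|$, i.e. precisely those of $\triangle ABC$; moreover anisotropy forces $s\neq 0$, $\mathbf{g}(s)\neq 0$ and $\mathbf{g}(s)\neq s$, so the triple is a genuine non--degenerate triangle. It remains to upgrade coincidence of squared distances to congruence, and here I would quote the finite--field analogue of the SSS criterion: when $p\equiv -1\pmod 4$ the form is anisotropic and a non--degenerate triangle is determined, up to the isometry group generated by translations and $O(2,\F_p)$, by its multiset of squared side lengths (this is the content of the remark preceding the statement, cf. \cite{BHIPR}, \cite{HI_simplexes}). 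Consequently the monochromatic triple is congruent to $\triangle ABC$.

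The hard part is this last rigidity step, the only place where $p\equiv -1\pmod 4$ is truly indispensable: anisotropy guarantees both that all pairwise distances are nonzero and that a point is pinned down, up to reflection in the line through two reference points, by its distances to them — a conclusion that genuinely fails when $-1$ is a square. By contrast, the hypothesis that $\triangle ABC$ be non--equilateral plays no role in the construction itself; I expect it is retained to parallel the real--variable Theorem~\ref{t:main_intr} and to avoid the vacuous case in which an equilateral triangle fails to embed in $\Pi$ (which occurs exactly when $3$ is a non--residue). I do not anticipate the Fourier estimates reappearing here, since they are entirely absorbed into Theorem~\ref{t:F_p_2}.
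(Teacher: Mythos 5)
Your proposal is correct, and although it shares the paper's skeleton---deduce the corollary from Theorem \ref{t:F_p_2} by producing a similarity $\mathbf{g}$ with $\mathbf{g}(B-A)=C-A$ such that $\mathbf{g}$ and $\mathbf{g}-I$ are invertible---the way you produce $\mathbf{g}$ is genuinely different, and it buys you strictly more. The paper first selects a pair of sides whose ratio of squared lengths is a quadratic residue (possible since the three ratios multiply to $1$); this step is forced because the paper imports $\mathbf{g}$ from \cite{HI_simplexes} as an orthogonal map composed with a dilation $D_\lambda$, which can only rescale $\|\cdot\|$ by the square $\lambda^2$. The paper must then also contend with the possibility that this $\mathbf{g}$ is a reflection--dilation, for which $\det(\mathbf{g}-I)=1-\lambda^2$ can vanish; this is exactly where its non--equilateral hypothesis and the admittedly incomplete parenthetical (``but we miss them'') enter. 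You instead solve for $\mathbf{g}$ inside the commutative matrix algebra $\{\alpha I+\beta J\}$, where $J$ is rotation by a right angle; for $p\equiv -1 \pmod 4$ anisotropy makes this a field isomorphic to $\F_{p^2}$, acting simply transitively on nonzero vectors, with norm $\alpha^2+\beta^2$ realizing every element of $\F_p$. This eliminates the residue condition, never produces a reflection, reduces invertibility of $\mathbf{g}$ and of $\mathbf{g}-I$ to the anisotropy statements $\|v\|\neq 0$ and $\|v-u\|\neq 0$, and shows that the non--equilateral hypothesis is superfluous. Both arguments rest on the same final deferred ingredient, SSS--rigidity in the anisotropic plane (equal squared side lengths imply congruence), which the paper uses silently and you cite to \cite{BHIPR}, \cite{HI_simplexes}; for completeness it has a two--line proof: reduce to triangles sharing the side $(0,u)$ by transitivity on spheres, then note that the reflection across the line $(v-v')^{\perp}$ fixes $0$ and $u$ and sends $v$ to $v'$. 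One small correction to your closing speculation: the paper retains the non--equilateral hypothesis not to exclude a vacuous case, but because its own reflection--dilation case degenerates when the two chosen sides are equal; as your construction demonstrates, the conclusion itself holds for any triangle that sits in $\Pi$.
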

\begin{proof}
First of all note that
any triangle has a pair of sides such that the quotient of its "lengths"\, is a quadratic residue.
Let $\| A-B\| = a$, $\|A-C\| = b$, and  $a/b$ be a quadratic residue.
Then there is an affine transformation $\mathbf{g}$ (which is a composition of an orthogonal map of $\Pi$ and a dilation, see \cite{HI_simplexes}) such that
$\mathbf{g} (A) = A$, $\mathbf{g} (B) = C$.
It is easy to see that both maps $\mathbf{g}$ and $\mathbf{g}-I$ are invertible.
Indeed, if $\mathbf{g}$ is not invertible then there is $x \neq 0$ such that $\| x \| =0$.
But in view of the assumption $p\equiv -1 \pmod 4$, we derive  $x=0$ with a contradiction
(note that in the case of $p\equiv 1 \pmod 4$ there is $i \in \F_p$ such that $i^2 \equiv -1 \pmod p$
and hence there are $x\neq 0$ with $\| x \| = 0$).
Finally, if $\mathbf{g}-I$ is not invertible then $\triangle ABC$ is equilateral 
and for some $x$ one has $\mathbf{g} x = x$ and
hence $\mathbf{g}$ is a mirror symmetry
(in the case there are some restrictions on the length of the side $a$ of $\triangle ABC$ as 
$a$ is nonresidual and $a+1$ is residual but we miss them).
This completes the proof.
$\hfill\Box$
\end{proof}

\begin{corollary}
    Let $p$ be a sufficiently large prime number.
    Then for any two--coloring of the plane $\Pi$ and any $a,b \neq 0$
    such that
    $a/b$ is a quadratic residue
    there is a monochromatic
    collinear triple $\{ x, y, z\}$
    with
    $\| y-x\| = a$, $\| z-y\| = b$.
\label{c:finite2}
\end{corollary}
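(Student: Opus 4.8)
The plan is to deduce the statement directly from Theorem \ref{t:F_p_2} by taking $\mathbf{g}$ to be a pure dilation $\mathbf{g} (s) = \lambda s$. Such a map has the pleasant feature that for any $s$ the points $x$, $x+s$, $x+\lambda s$ are automatically collinear, so the triple produced by the theorem will already be collinear; the only work is to arrange the two squared distances to equal $a$ and $b$.

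First I would exploit the hypothesis. Since $a/b$ is a quadratic residue, so is its inverse $b/a$, and hence there is $c \in \F_p$ with $c^2 = b/a$. I would pick the sign of $c$ so that $\lambda := 1 + c \neq 0$; this is always possible, because if $c = -1$ is one root then the other root $-c = 1$ works instead. Setting $\mathbf{g} = \lambda I$, the map $\mathbf{g}$ is invertible as $\lambda \neq 0$, and $\mathbf{g} - I = c\, I$ is invertible as $c \neq 0$ (which holds because $b \neq 0$, so that $b/a \neq 0$). Thus both hypotheses of Theorem \ref{t:F_p_2} are satisfied.

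Next I would apply Theorem \ref{t:F_p_2} to the sphere $\S_a$ with this $\mathbf{g}$, obtaining a monochromatic triple $\{x,y,z\}$ with $y = x+s$, $s \in \S_a$, and $z = x + \lambda s$. Collinearity is immediate, since $y-x = s$ and $z-x = \lambda s$ are proportional over $\F_p$. For the distances, $\|y-x\| = \|s\| = a$ by the choice of sphere, while $z - y = (\lambda-1)s = c\,s$ gives $\|z-y\| = c^2\|s\| = (b/a)\cdot a = b$; note also that $\lambda \neq 0,1$ and $s \neq 0$ force the three points to be distinct. This yields exactly the required triple.

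The argument has essentially no hard step: the one point needing attention is the selection of the square root $c$ that guarantees $\lambda \neq 0$, after which the dilation simultaneously forces collinearity and rescales the squared distance by the prescribed factor $c^2 = b/a$. All of the analytic difficulty has already been absorbed into Theorem \ref{t:F_p_2} via the Gauss and Kloosterman sum estimates of Lemma \ref{l:sheres_Fourier}, so I expect no further obstacle here.
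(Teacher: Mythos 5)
Your proposal is correct and is essentially the paper's intended argument: the paper derives this corollary as an ``immediate consequence'' of Theorem \ref{t:F_p_2} by choosing $\mathbf{g}$ to be a suitable dilation, which is exactly your $\mathbf{g} = \lambda I$ with $\lambda = 1+c$, $c^2 = b/a$. Your write-up merely makes explicit the details the paper omits (invertibility of $\mathbf{g}$ and $\mathbf{g}-I$, the sign choice ensuring $\lambda \neq 0$, and the rescaling $\|cs\| = c^2\|s\|$), all of which check out.
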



{\bf Problem.}
Is it possible to find larger monochromatic configurations from $\Pi$ in the spirit of papers \cite{CCS}, \cite{Wolf_AP4}?


\section{Euclidian plane}
\label{sec:euclid}

In the section we consider the case of the  usual euclidian plane and try to obtain an analog of Theorem \ref{t:F_p_2}.
The proof
follows
the arguments from \cite{OFV} as well as the approach (and the notation) from the previous section.

Put
$\Pi = \R^2$ and
let
$A\subseteq \Pi$ be a measurable set.
By the {\it upper density} of $A$ define
\begin{equation}\label{f:upper_density}
    \ov{\d}_A := \limsup_{T\to +\infty} \frac{\Vol (A\cap [-T,T]^2)}{(2T)^2} \,.
\end{equation}

A measurable, complex valued function $f: \Pi \to \C$ is called {\it periodic} if there is a basis $b_1,b_2 \in \Pi$ such that for all $\a_1,\a_2 \in \Z$ one has $f(x+\a_1 b_1 + \a_2 b_2) = f(x)$.
The set $L=\{  \a_1 b_1 + \a_2 b_2 ~:~ \a_1,\a_2 \in \Z \}$ is called the {\it period lattice} of $f$ and
$L^* = \{ u \in \Pi ~:~ \langle u,x \rangle \in \Z,\, \forall x\in L \}$ is called the {\it dual lattice} of $L$.
Here $\langle \cdot, \cdot \rangle$ is the usual scalar product onto $\Pi$.
If the characteristic function of a measurable set $A$ is periodic then it is easy to see that $\limsup$ in (\ref{f:upper_density}) can be replaced
by a
simple limit.

We have a scalar product on the space of periodic functions
$$
    \langle f, g \rangle = \lim_{T \to +\infty} \frac{1}{(2T)^2} \int_{[-T,T]^2} f(x) \ov{g(x)}\, dx \,.
$$
The Fourier transform of a (periodic) function $f$ is given by the formula
$\FF{f} (u) = \langle f(x), e^{i \langle u, x\rangle} \rangle$.
It is easy to check that the support of Fourier transform of a periodic function $f$
belongs to $2\pi L^*$.
In particular, the support  is a discrete set.

The Bessel function of the first kind $J_\nu (z)$ is the series (see e.g. \cite{Special_functions})
\begin{equation}\label{f:Bessel_series}
    J_\nu (z) = \sum_{k=0}^{+\infty} \frac{(-1)^k (z/2)^{v+2k}}{k! \G (\nu+k+1)} \,.
\end{equation}
It is well--known that
\begin{equation}\label{f:Bessel_Fourier}
    J_0 (\| u\|) = \frac{1}{2\pi} \langle \S_1 (x), e^{iux} \rangle \,,
\end{equation}
where $J_0$ is the zeroth Bessel function and
$$\S_a = \{ x\in \Pi ~:~ \| x \| := \sqrt{x_1^2+x_2^2} = a\}$$
is a circle of radios $a$.

\begin{theorem}
    Let $a>0$ and $\kappa > 0$ be real numbers.
    Suppose that for all $t\ge 0$ one has
\begin{equation}\label{c:euclid}
    J_0 (t) + J_0 (\kappa t) + J_0 ((1+\kappa)t) > - 1 \,.
\end{equation}
    Then for any measurable coloring of the plane $\Pi$ into two colors
    there is a monochromatic collinear triple $\{ x, y, z\}$ such that $y \in [x,z]$ and
    $\| y-x\| = a$, $\| z - y\| = \kappa a$.
\label{t:euclid}
\end{theorem}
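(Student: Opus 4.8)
The plan is to transplant the finite-field argument of Theorem \ref{t:F_p_2} to $\R^2$, replacing the sphere $\S_j$ by the circle $\S_a$ and the Gauss/Kloosterman bound (\ref{f:sheres_Fourier_2}) by the Bessel identity (\ref{f:Bessel_Fourier}). The target configuration is exactly $x$, $y=x+u$, $z=x+(1+\kappa)u$ with $u\in\S_a$, since then $y\in[x,z]$, $\|y-x\|=a$ and $\|z-y\|=\kappa a$ automatically. I would first reduce to a \emph{periodic} coloring by a standard compactness/averaging argument, so that the periodic Fourier calculus developed above (spectrum in $2\pi L^*$, Parseval, support discrete) applies. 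For a periodic two-coloring $\Pi = A\sqcup B$ with densities $\d_A,\d_B$ I introduce the counting form
$$
\Lambda(A) = \left\langle \int_{\S_a} A(x)\, A(x+u)\, A(x+(1+\kappa)u)\, d\nu_a(u) \right\rangle,
$$
where $\nu_a$ is the uniform probability measure on $\S_a$ and $\langle\cdot\rangle$ is the periodic mean; positivity of $\Lambda(A)$ or of $\Lambda(B)$ produces the desired monochromatic triple, which is automatically non-degenerate because $a>0$, $\kappa>0$.

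Next I would expand $A = \d_A + f_A$ in balanced functions just as in (\ref{tmp:30.06.2015_1}), obtaining
$$
\Lambda(A) = \d_A^3 + \d_A(\sigma_1 + \sigma_1' + \sigma_1'') + \sigma_2(A),
$$
where $\sigma_1,\sigma_1',\sigma_1''$ are the three quadratic terms attached to the pairwise distances $a$, $(1+\kappa)a$, $\kappa a$, and $\sigma_2$ is the purely cubic term. The structural input — available precisely because there are two colours — is $f_B = -f_A$, whence $\sigma_2(A)+\sigma_2(B)=0$ while the quadratic terms coincide for the two colours. Summing,
$$
\Lambda(A) + \Lambda(B) = \d_A^3 + \d_B^3 + (\sigma_1 + \sigma_1' + \sigma_1'').
$$

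Using the periodic Parseval identity together with (\ref{f:Bessel_Fourier}), which identifies the Fourier multiplier of convolution with $\nu_b$ as $J_0(b\|u\|)$, the quadratic part becomes
$$
\sigma_1 + \sigma_1' + \sigma_1'' = \sum_{u \ne 0} \big( J_0(a\|u\|) + J_0(\kappa a\|u\|) + J_0((1+\kappa)a\|u\|) \big)\, |\FF{f}_A(u)|^2,
$$
the sum running over $u\in 2\pi L^*\setminus\{0\}$ since $f_A$ has zero mean. Writing $t=a\|u\|>0$, the bracket is exactly the left-hand side of (\ref{c:euclid}). Since $g(t):=J_0(t)+J_0(\kappa t)+J_0((1+\kappa)t)$ is continuous and $g(t)\to 0$ as $t\to\infty$, the pointwise bound $g>-1$ upgrades to $-1 < c:=\inf_{t\ge 0} g(t) \le 0$; hence the quadratic part is at least $c\sum_{u\ne 0}|\FF{f}_A(u)|^2 = c\,\d_A\d_B$, using Parseval and $\|f_A\|^2=\d_A(1-\d_A)=\d_A\d_B$.

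Finally, with $\d_A+\d_B=1$ one has $\d_A^3+\d_B^3 = 1-3\d_A\d_B$ and $\d_A\d_B\le 1/4$, and since $3-c>0$,
$$
\Lambda(A) + \Lambda(B) \ge 1 - (3 - c)\d_A\d_B \ge 1 - \frac{3-c}{4} > 0
$$
precisely because $c>-1$; this forces a monochromatic configuration in one colour. The Fourier bookkeeping (the evaluations of $\sigma_1,\sigma_2$ and the Parseval steps) parallels the finite-field case and is the least delicate part, and the spectral inequality is essentially forced by the hypothesis — the only subtlety there being the easy passage from $g>-1$ pointwise to a strict infimum, which is exactly what makes the balanced case $\d_A=\d_B=1/2$ go through. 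I expect the genuine obstacle to be the reduction from an arbitrary measurable coloring to a periodic one, i.e. justifying that the periodic mean $\langle\cdot\rangle$ and the discrete Fourier framework may be invoked without loss of generality.
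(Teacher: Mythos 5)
Your proposal is correct and follows essentially the same route as the paper's proof: reduce to a periodic two-coloring, expand in balanced functions, kill the cubic term using $f_B=-f_A$, identify the quadratic terms' Fourier multiplier as $J_0(at)+J_0(\kappa at)+J_0((1+\kappa)at)$ supported on the discrete spectrum, and conclude by the optimization $1-(3-J)\d_A\d_B\ge (1+J)/4>0$. The one step you defer, the periodization, is precisely what the paper carries out concretely by truncating the colors to $[-T+(1+\kappa)a,\,T-(1+\kappa)a]^2$ (the buffer of width $(1+\kappa)a$ guaranteeing no new configurations are created) and tiling by the lattice $2T\Z^2$, so nothing essential is missing from your outline.
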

\begin{proof}
We follow the arguments of the proof of Theorem \ref{t:F_p_2}.
Let $\S = \S_a$ be the circle of radios $a$ and $A,B$ be the colors of upper densities $\ov{\d}_A$, $\ov{\d}_B$.
We suppose that $A$ and $B$ do not contain collinear triples $\{ x, y, z\}$ such that
$y \in [x,z]$ and $\| y-x\| = a$, $\| z - y\| = \kappa a$.

One can
assume
that $A(x)$ and $B(x)$ are periodic functions.
Indeed, choose $T$ is large enough that $[-T+(1+\kappa)a,T-(1+\kappa)a]^2 / (2T)^2$
is sufficiently close to $1$ and such that
$\Vol (A\cap [-T,T]) / (2T)^2$ is sufficiently close to $\ov{\d}_A$.
After that construct a periodic tiling of $\R^2$ with copies of $A\cap [-T+(1+\kappa)a,T-(1+\kappa)a]^2$
and $B\cap [-T+(1+\kappa)a,T-(1+\kappa)a]^2$,
translating the copies by the points of lattice $2T \Z$.
Denote the obtained new colors as $A_*$ and $B_*$.
Clearly, $\d_{A_*}$ can be chosen is close to $\ov{\d}_{A}$ and that $\d_{A_*} + \d_{B_*} = 1$.
Note also that $A_*$, $B_*$ do not contain collinear triples $\{ x, y, z\}$ such that
$y \in [x,z]$ and $\| y-x\| = a$, $\| z - y\| = \kappa a$.

As
in the proof of Theorem \ref{t:F_p_2}
consider the quantity $\sigma (h_1,h_2,h_3)$,  which is trilinear by three arguments $h_1,h_2,h_3$, namely,
$$
    \sigma (A_*,A_*,A_*) = \sigma (A_*) :=
$$
$$
        \lim_{T \to +\infty} \frac{1}{(2T)^2}
            \int \int_{s\in \S} (\d_{A_*} + f_{A_*}) (x) (\d_{A_*} + f_{A_*}) (x+s) (\d_{A_*} + f_{A_*}) (x-\kappa s) \,dxds \,,
$$
where  again
$f_{A_*} (x) = A_* (x) - \d_{A_*}$ is the balanced function of $A_*$.
We have
\begin{equation}\label{tmp:03.07.2015_-1}
    \sigma (A_*) :=
            2\pi a \d^3_{A_*}
                +
                    \d_A ( \sigma (f_{A_*}, f_{A_*}, 1) + \sigma (f_{A_*}, 1, f_{A_*}) + \sigma (1, f_{A_*}, f_{A_*}) )
                +
                                        \sigma (f_{A_*}) \,.
\end{equation}
As in the proof of Theorem \ref{t:F_p_2} the following holds
$\sigma (f_{A_*}) + \sigma (f_{B_*}) = 0$
and thus we need to bound the remain three quantities in (\ref{tmp:03.07.2015_-1}).
Clearly,
$$
    \sigma (f_{A_*}, f_{A_*}, 1)
        =
            \langle f_{A_*} \circ f_{A_*}, \S \rangle \,.
$$
Using the Fourier transform, we get
\begin{equation}\label{tmp:03.07.2015_1}
    \sigma (f_{A_*}, f_{A_*}, 1) = \sum_{u\in \R^2} |\FF{f}_{A_*} (u)|^2 \FF{\S} (u) \,.
\end{equation}
As we noted before the sum in (\ref{tmp:03.07.2015_1}) is actually taking over a discrete set.
Putting
$$
    \a (t) := \sum_{u \in \R^2 ~:~ \| u\| = t} |\FF{f}_{A_*} (u)|^2 \ge 0 \,,
$$
we obtain by (\ref{f:Bessel_Fourier})
\begin{equation*}\label{tmp:03.07.2015_2}
    \sigma (f_{A_*}, f_{A_*}, 1) = 2\pi a \sum_{t \ge 0} J_0 (at) \a (t) \,.
\end{equation*}
Here we have used the formula
$$
    \FF{\S}_b (u) = b \FF{\S}_1 (b u) = 2\pi b J_0 (\| b u\|) \,,
$$
where $b>0$ is an arbitrary.
Similarly,
$$
    \sigma (f_{A_*}, 1, f_{A_*}) = \langle (f_{A_*} \circ f_{A_*}) (\kappa s), \S (s) \rangle
        =
            2\pi a \sum_{t\ge 0} J_0 (\kappa at) \a(t) \,,
$$
and
$$
    \sigma (1, f_{A_*}, f_{A_*}) = \langle (f_{A_*} \circ f_{A_*}) ((1+\kappa) s), \S (s) \rangle
        =
            2\pi a \sum_{t\ge 0} J_0 ((1+\kappa)at) \a(t) \,.
$$
Thus
$$
    \sigma (f_{A_*}, f_{A_*}, 1) + \sigma (f_{A_*}, 1, f_{A_*}) + \sigma (1, f_{A_*}, f_{A_*})
        \ge
            2\pi a \sum_{t\ge 0} \a(t) (J_0 (at) + J_0 (\kappa at) + J_0 ((1+\kappa)at) \,.
$$
By $J$ define the quantity $J=\min_{t\ge 0} (J_0 (at) + J_0 (\kappa at) + J_0 ((1+\kappa)at)$.
Applying  the Parseval identity and the observation $\FF{A}_* (0) = \d_{A_*}$, we have
\begin{equation}\label{tmp:07.07.2015_Par}
    \sum_{t\ge 0} \a (t) = \sum_{u} |\FF{A}_* (u)|^2 - \d^2_{A_*} = \d_{A_*} - \d^2_{A_*} \,.
\end{equation}
Returning to (\ref{tmp:03.07.2015_-1}) and combining it with the last formula, we obtain
$$
    (2\pi a)^{-1} (\sigma (A_*) + \sigma (B_*)) \ge
        \d^3_{A_*} + \d^3_{B_*} + J (\d^2_{A_*} - \d^3_{A_*}) + J (\d^2_{B_*} - \d^3_{B_*})
            =
                (\d^3_{A_*} + \d^3_{B_*}) (1-J) + (\d^2_{A_*} + \d^2_{B_*}) J \,.
$$
Because of $\d_{A_*}+ \d_{B_*}=1$ the optimization gives us
$$
    (2\pi a)^{-1} (\sigma (A_*) + \sigma (B_*)) \ge \frac{J+1}{4} > 0 \,.
$$
Here we have used condition (\ref{c:euclid}).
This completes the proof.
$\hfill\Box$
\end{proof}

\begin{corollary}
    Let $a>0$ be a real number.
    Then for any measurable coloring of the plane $\Pi$ into two colors
    there is a monochromatic collinear triple $\{ x, y, z\}$ such that $y \in [x,z]$ and
    $\| y-x\| = \| z - y\| = a$.
\end{corollary}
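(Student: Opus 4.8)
The plan is to deduce the statement directly from Theorem \ref{t:euclid} by specialising to $\kappa = 1$. The asserted configuration $\| y-x\| = \| z-y\| = a$ is precisely a collinear triple with $y\in[x,z]$ and ratio $\| z-y\|/\| y-x\| = 1$, so it remains only to check that hypothesis (\ref{c:euclid}) holds for $\kappa = 1$, i.e. that
\[
    g(t) := 2 J_0 (t) + J_0 (2t) > -1 \qquad \text{for all } t \ge 0 \,.
\]
Since $J_0(0)=1$ we have $g(0)=3$, and $g$ is smooth, so the whole difficulty lies in the uniform lower bound. I would handle the tail and a compact range separately.

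For the tail I would use the classical envelope estimate $|J_0(s)| \le \sqrt{2/(\pi s)}$ (which holds for all $s>0$, and in any case for the values $s\ge 5$ that we need). It gives, for $t\ge 5$,
\[
    |g(t)| \le 2\sqrt{\tfrac{2}{\pi t}} + \sqrt{\tfrac{1}{\pi t}} \le 2\sqrt{\tfrac{2}{5\pi}} + \sqrt{\tfrac{1}{5\pi}} < 0.97 < 1 \,,
\]
the middle expression being decreasing in $t$. Hence $g(t) > -1$ on $[5,+\infty)$ with room to spare.

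On the compact interval $[0,5]$ I would argue by a Lipschitz/grid certificate. Differentiating (\ref{f:Bessel_series}) gives $g'(t) = -2J_1(t) - 2J_1(2t)$, so $|g'(t)| \le 4$ using $|J_1| \le 1$; thus $g$ is $4$-Lipschitz. Sampling $g$ on a uniform grid of mesh $h = 1/20$, one finds that the least sampled value is about $-0.731$ (attained near $t\approx 4.5$), while the Lipschitz bound shows the true minimum can differ from the sampled one by at most $2h = 0.1$. Therefore $g(t) > -0.84 > -1$ throughout $[0,5]$. Combining the two ranges yields (\ref{c:euclid}) for $\kappa=1$, and an appeal to Theorem \ref{t:euclid} completes the argument.

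The only genuinely nontrivial point is this certified minimisation on $[0,5]$, and it is the step I expect to be the main obstacle: around $t \approx 4.5$ the two terms $2J_0(t)$ and $J_0(2t)$ are simultaneously negative, so the sum dips to roughly $-0.73$ — comfortably above $-1$, yet far enough from $0$ that a crude bound will not suffice and the compact range must be controlled explicitly. The generous margin ($\approx 0.27$) is exactly what makes a coarse grid together with the trivial Lipschitz constant adequate, so no delicate estimation of the Bessel functions is required.
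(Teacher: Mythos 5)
Your proposal is correct and follows essentially the same route as the paper: specialize Theorem \ref{t:euclid} to $\kappa=1$ and verify $2J_0(t)+J_0(2t)>-1$ by a numerical check on a compact interval together with an analytic bound on the tail. The only differences are in execution — the paper checks $[0,50]$ by Maple and handles $t>50$ via Landau's bound $|J_\nu(t)|\le |t|^{-1/3}$, whereas you use the sharper envelope $|J_0(s)|\le\sqrt{2/(\pi s)}$ to shrink the compact range to $[0,5]$ and then certify the grid computation there with an explicit Lipschitz constant, which is a somewhat more rigorously documented variant of the same idea.
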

\begin{proof}
By Theorem \ref{t:euclid}, we need to estimate
$\min_{t\ge 0} (2J_0 (at) + J_0 (2at)) = \min_{t\ge 0} (2J_0 (t) + J_0 (2t))$.
Using Maple, say, one can calculate $\min_{t\in [0,50]} (2J_0 (t) + J_0 (2t)) \ge -0.74$.
For $t>50$, applying a crude upper bound $|J_\nu (t)| \le |t|^{-1/3}$, $\nu \ge 0$
(see e.g. \cite{Bessel_upp}),
we
insure
that the minimum is strictly greater than $-1$
for all $t\ge 0$.
This concludes the proof.
$\hfill\Box$
\end{proof}

\bigskip 

Below we will deal with affine transformations $\mathbf{g}$ of the form 
$\mathbf{g} = D_\o \circ R$,  
where $R$ be a rotation  and $D_\o$ be a dilation by some $\o>0$.
Let us note a simple lemma about such $\mathbf{g}$.

\begin{lemma}
    Let $\mathbf{g} = D_\o \circ R$, where $D_\o$ be a dilation by $\o$ and $R$ be a rotation by $\_phi$. 
    Then $\mathbf{g}-I$ 
    has the same form 
    $D_{\o'} \circ R'$, where
    $\o' = \sqrt{\o^2 - 2\o \cos \_phi +1}$ and $R'$ is another rotation. 
\label{l:g-I}
\end{lemma}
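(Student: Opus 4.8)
The plan is to pass to complex coordinates, where a composition of a dilation and a rotation becomes multiplication by a single complex scalar. Identify the plane $\Pi = \R^2$ with $\C$ in the usual way, writing a point $x=(x_1,x_2)$ as the complex number $x_1 + i x_2$. Under this identification the rotation $R$ by the angle $\varphi$ is multiplication by $e^{i\varphi}$, the dilation $D_\o$ is multiplication by the real number $\o$, and hence $\mathbf{g} = D_\o \circ R$ acts as multiplication by the single complex scalar $\o e^{i\varphi}$. This is the key reduction: it turns the composition of two geometric operations into ordinary multiplication in $\C$.

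Since the identity map $I$ is multiplication by $1$, the map $\mathbf{g} - I$ acts as multiplication by $z := \o e^{i\varphi} - 1$. Writing $z$ in polar form $z = \o' e^{i\varphi'}$ with $\o' = |z| \ge 0$, one sees immediately that $\mathbf{g} - I = D_{\o'} \circ R'$, where $D_{\o'}$ is the dilation by $\o'$ and $R'$ is the rotation by the argument $\varphi'$ of $z$, so the map has exactly the asserted shape. It then remains only to compute the modulus: expanding $z = (\o\cos\varphi - 1) + i\,\o \sin\varphi$ gives $|z|^2 = (\o\cos\varphi - 1)^2 + \o^2 \sin^2\varphi = \o^2 - 2\o\cos\varphi + 1$, which is precisely the claimed formula for $\o'$. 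Thus the content of the lemma is this short polar-coordinate computation. (Alternatively, one may argue with real $2\m 2$ matrices, noting that $D_\o \circ R$ is a scalar multiple of a rotation matrix, that subtracting $I$ preserves the shape of a scaled rotation, and that the resulting scale factor is again $\sqrt{\o^2 - 2\o\cos\varphi + 1}$.)

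There is no genuine obstacle here; the only point deserving a word is the degenerate case $z=0$, equivalently $\o = 1$ and $\varphi \equiv 0$, in which $\mathbf{g}=I$ and $\o' = 0$, so that $\mathbf{g}-I$ is not a dilation in the strict sense. This situation is excluded precisely when $\mathbf{g}\neq I$, and it is exactly where the formula $\o' = \sqrt{\o^2 - 2\o\cos\varphi + 1}$ vanishes, so the statement is consistent with the convention that a dilation by a positive factor is intended. Everything else is the elementary identification of scaled rotations of the plane with multiplication by nonzero complex numbers.
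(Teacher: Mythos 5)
Your proof is correct, and it is in substance the same computation as the paper's: the paper writes the unknown map as $D_{\omega'}\circ R'$ and solves the real system $\omega\cos\varphi-1=\omega'\cos\varphi'$, $\omega'\sin\varphi'=\omega\sin\varphi$, squaring and summing to get $(\omega')^2=\omega^2-2\omega\cos\varphi+1$, exactly your expansion of $|\omega e^{i\varphi}-1|^2$. The only genuine difference is packaging: by identifying $\Pi$ with $\C$ and viewing $\mathbf{g}-I$ as multiplication by $\omega e^{i\varphi}-1$, you get the existence of the rotation angle $\varphi'$ for free from the polar decomposition of a nonzero complex number, whereas the paper must separately verify that $|(\omega\cos\varphi-1)/\omega'|\le 1$ and $|\omega\sin\varphi/\omega'|\le 1$ so that $\varphi'$ exists; your remark about the degenerate case $\omega e^{i\varphi}=1$ (i.e.\ $\mathbf{g}=I$, where $\omega'=0$ and no rotation angle is defined) is a point the paper passes over silently.
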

\begin{proof}
To obtain the result we need to solve the system of equations 
$\o \cos \_phi - 1 =\o' \cos \_phi'$, $\o' \sin \_phi' = \o \sin \_phi$ in variables $\o', \_phi'$. 
Taking a square and a summation give us $$(\o')^2 =  \o^2 - 2\o \cos \_phi +1 \ge 0$$ 
and thus $\sin \_phi' = \o / \o' \cdot \sin \_phi$, $\cos \_phi' = (\o \cos \_phi - 1)/\o'$. 
On can check that the modules of $\o / \o' \cdot \sin \_phi$
as well as $(\o \cos \_phi - 1)/\o'$ do not exceed $1$ and hence $\_phi'$ exists.  
This completes the proof.
$\hfill\Box$
\end{proof}

\bigskip

Similarly to Theorem \ref{t:euclid}
as well as
Theorem \ref{t:F_p_2}, one can obtain the following general result, which is however 
not so wide as 
Theorem \ref{t:F_p_2}.

\begin{theorem}
    Let $a>0$ and $\o > 0$ be real numbers.
    Let also $\mathbf{g} = D_\o \circ R$ be an affine  transformation of $\Pi$,
    where $R$ be a rotation  and $D_\o$ be a dilation by $\o$.
    Suppose that for all $t\ge 0$ one has
\begin{equation}\label{c:euclid'}
    J_0 (t) + J_0 (\o t) + J_0 > - 1 \,,
\end{equation}
    where $J_0 = \min_{t\ge 0} J_0 (t) = -0.4027593957...$.
    Then for any measurable coloring of the plane $\Pi$ into two colors
    there is a monochromatic collinear triple $\{ x, y, z\}$ such that $y=x+s$, $s\in \S_a$
    and $z = x + \mathbf{g} (s)$.
    More precisely, if $R$ is a rotation by $\_phi$ then condition (\ref{c:euclid'}) can be replaced by 
\begin{equation}\label{c:euclid''}
    J_0 (t) + J_0 (t \o) + J_0 (t \sqrt{\o^2 - 2\o \cos \_phi +1}) > - 1 \,.
\end{equation}
\label{t:euclid'}
\end{theorem}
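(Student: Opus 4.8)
The plan is to repeat almost verbatim the Fourier--Bessel computation from the proof of Theorem \ref{t:euclid}, the only new ingredient being the identification of the three pairwise distances occurring in the configuration $\{x,\,x+s,\,x+\mathbf{g}(s)\}$. As there, I would first pass to periodic colours $A_*,B_*$ by truncating $A,B$ to a large box $[-T,T]^2$ with a safety margin of order $(1+\o)a$ (so that no admissible triple is cut), tiling $\R^2$ by the lattice $2T\Z$, and choosing $T$ so that $\d_{A_*}$ is as close as we wish to $\ov{\d}_A$ while $\d_{A_*}+\d_{B_*}=1$. Arguing by contradiction, I assume that neither colour contains a triple $\{x,\,x+s,\,x+\mathbf{g}(s)\}$ with $s\in\S_a$; then the counting integral
$$
    \sigma(A_*) := \lim_{T\to+\infty}\frac{1}{(2T)^2}\int \int_{s\in\S}
        (\d_{A_*}+f_{A_*})(x)\,(\d_{A_*}+f_{A_*})(x+s)\,(\d_{A_*}+f_{A_*})(x+\mathbf{g}(s))\,dx\,ds
$$
(with $\S=\S_a$) vanishes, and likewise $\sigma(B_*)=0$. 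Expanding by trilinearity kills the linear terms, leaves the main term $2\pi a\,\d^3_{A_*}$, the purely cubic term $\sigma(f_{A_*})$, and the three quadratic cross terms $\sigma(f_{A_*},f_{A_*},1)$, $\sigma(f_{A_*},1,f_{A_*})$, $\sigma(1,f_{A_*},f_{A_*})$, exactly as in (\ref{tmp:03.07.2015_-1}).

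The heart of the matter is to evaluate these three cross terms. Writing $M$ for a linear map of $\Pi$ and using $\FF{f_{A_*}\circ f_{A_*}}=|\FF{f}_{A_*}|^2$ together with the adjoint relation $\langle u,Ms\rangle=\langle M^\T u,s\rangle$ and the identity $\FF{\S}(w)=2\pi a\,J_0(a\|w\|)$ coming from (\ref{f:Bessel_Fourier}), one gets
$$
    \langle (f_{A_*}\circ f_{A_*})(Ms),\,\S(s)\rangle
        = 2\pi a\sum_{u} |\FF{f}_{A_*}(u)|^2\,J_0\big(a\|M^\T u\|\big) \,.
$$
The three cross terms correspond to $M=I$, $M=\mathbf{g}$ and $M=\mathbf{g}-I$, i.e.\ to the three displacements $s$, $\mathbf{g}(s)$ and $\mathbf{g}(s)-s$ of the configuration. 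Since $R$ is an isometry, $\|\mathbf{g}^\T u\|=\o\|u\|$, while Lemma \ref{l:g-I} identifies $\mathbf{g}-I=D_{\o'}\circ R'$ with $\o'=\sqrt{\o^2-2\o\cos\varphi+1}$, so that $\|(\mathbf{g}-I)^\T u\|=\o'\|u\|$. Putting $\a(t):=\sum_{\|u\|=t}|\FF{f}_{A_*}(u)|^2\ge0$ I therefore obtain
$$
    \sigma(f_{A_*},f_{A_*},1)+\sigma(f_{A_*},1,f_{A_*})+\sigma(1,f_{A_*},f_{A_*})
        = 2\pi a\sum_{t\ge0}\a(t)\big(J_0(at)+J_0(\o at)+J_0(\o' at)\big) \,.
$$

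To finish I would argue as in Theorem \ref{t:euclid}. The cubic terms cancel, $\sigma(f_{A_*})+\sigma(f_{B_*})=0$, because $f_{A_*}=-f_{B_*}$ and $\sigma$ is cubic, hence odd, in its argument; this is the same observation as $\sigma_2(A)+\sigma_2(B)=0$ in the proof of Theorem \ref{t:F_p_2}. Bounding each bracket below by $J:=\min_{t\ge0}\big(J_0(t)+J_0(\o t)+J_0(\o' t)\big)$ and using Parseval in the form $\sum_{t\ge0}\a(t)=\d_{A_*}-\d^2_{A_*}$ (cf.\ (\ref{tmp:07.07.2015_Par})), the same optimization over $\d_{A_*}+\d_{B_*}=1$ gives
$$
    (2\pi a)^{-1}\big(\sigma(A_*)+\sigma(B_*)\big)\ge \frac{J+1}{4}>0 \,,
$$
which contradicts $\sigma(A_*)=\sigma(B_*)=0$ and so forces one colour to contain the desired triple. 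For the cruder hypothesis (\ref{c:euclid'}) I would simply replace the third summand by its universal lower bound $J_0(\o' at)\ge J_0=\min_{t\ge0}J_0(t)$ before taking the minimum, so that (\ref{c:euclid'}) already forces $J>-1$; the sharper (\ref{c:euclid''}) keeps the exact term $J_0(\o' t)$.

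The only genuinely new step, and the one I expect to demand the most care, is the evaluation of the third cross term $\sigma(1,f_{A_*},f_{A_*})$, which is governed by the map $\mathbf{g}-I$ rather than by $\mathbf{g}$ itself; here Lemma \ref{l:g-I} is exactly what is needed to recognise $\mathbf{g}-I$ as a dilation--rotation and to read off the norm factor $\o'=\sqrt{\o^2-2\o\cos\varphi+1}$. A secondary point to be treated carefully is the passage to periodic colours and the justification that the circle-measure Fourier identity $\FF{\S}(w)=2\pi a\,J_0(a\|w\|)$ may be inserted inside the (discrete) spectral sum, but both are routine given the corresponding steps in Theorem \ref{t:euclid}.
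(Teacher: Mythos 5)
Your proposal is correct and follows essentially the same route as the paper: the same periodization with a safety margin, the same trilinear expansion with cancellation of the cubic terms via $f_{A_*}=-f_{B_*}$, the same Fourier--Bessel evaluation of the three cross terms (with Lemma \ref{l:g-I} identifying $\mathbf{g}-I$ as a dilation--rotation of modulus $\sqrt{\o^2-2\o\cos\varphi+1}$), and the same Parseval-plus-optimization step yielding $(J+1)/4>0$. The only cosmetic differences are that you handle the $\mathbf{g}$ and $\mathbf{g}-I$ terms via the transpose identity $\langle u, Ms\rangle=\langle M^{\T}u,s\rangle$ instead of the paper's change of variables with a determinant factor, and that you prove the sharper condition (\ref{c:euclid''}) first and deduce (\ref{c:euclid'}) by bounding $J_0$ below by its global minimum, whereas the paper argues in the opposite order.
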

\begin{proof}
We use the notation and the arguments of the proof of Theorem \ref{t:euclid}.
Then
$$
    \sigma (A_*,A_*,A_*) = \sigma (A_*) :=
$$
$$
        \lim_{T \to +\infty} \frac{1}{(2T)^2}
            \int \int_{s\in \S} (\d_{A_*} + f_{A_*}) (x) (\d_{A_*} + f_{A_*}) (x+s)
                (\d_{A_*} + f_{A_*}) (x+\mathbf{g} (s)) \,dxds \,,
$$
$$
    =
            2\pi a \d^3_{A_*}
                +
                    \d_A ( \sigma (f_{A_*}, f_{A_*}, 1) + \sigma (f_{A_*}, 1, f_{A_*}) + \sigma (1, f_{A_*}, f_{A_*}) )
                +
                                        \sigma (f_{A_*}) \,.
$$
Again, we need to estimate
$
    \sigma (f_{A_*}, f_{A_*}, 1), \sigma (f_{A_*}, 1, f_{A_*}), \sigma (1, f_{A_*}, f_{A_*})
$.
The first quantity is the same as in the proof of Theorem \ref{t:euclid}.
The second one equals
\begin{equation}\label{tmp:22.07.2015_1}
    \sigma (f_{A_*}, 1, f_{A_*}) = \langle (f_{A_*} \circ f_{A_*}) (\mathbf{g} (s)), \S (s) \rangle
        =
            \langle (f_{A_*} \circ f_{A_*}) (s), \S (\mathbf{g}^{-1} (s)) \rangle \cdot \det (\mathbf{g})^{-1} \,.
\end{equation}
As we know for any $b>0$ one has
\begin{equation}\label{tmp:22.07.2015_2}
    \FF{\S}_b (u) = b \FF{\S}_1 (b u) = 2\pi b J_0 (\| b u\|) \,.
\end{equation}
Hence
\begin{equation}\label{tmp:22.07.2015_3}
    \langle \S (\mathbf{g}^{-1} (s)), e^{i \langle u, s \rangle} \rangle
    = 2\pi a J_0 (\o a\| u\|) \cdot \det (\mathbf{g}) \,.
\end{equation}
In terms of quantities $\a (t)$ it follows that
$$
    \sigma (f_{A_*}, 1, f_{A_*}) = 2\pi a \sum_{t\ge 0} \a(t) J_0 (\o a t) \,.
$$
Finally, in the estimation of the third term $\sigma (1, f_{A_*}, f_{A_*})$ the quantity $(\mathbf{g}-I)^{-1}$ appears.
Hence (see the proof of Theorem \ref{t:F_p_2}), we get
\begin{equation}\label{tmp:04.07.2015_1}
    \sigma (f_{A_*}, 1, f_{A_*}) = \sum_{u\in \R^2} |\FF{f}_{A_*} (u)|^2
        \langle \S ((\mathbf{g}-I)^{-1} (s)), e^{i \langle u, s \rangle} \rangle \,.
\end{equation}
Unfortunately, if $u$ runs over a circle then $(\mathbf{g}-I)^{-1} (u)$
do not belong to
a circle in the case of general transformation $\mathbf{g}$ 
(but it is so in the case of Theorem \ref{t:euclid} when $\{x,y,z\}$ are collinear).
Nevertheless we estimate (\ref{tmp:04.07.2015_1}) with help of (\ref{tmp:07.07.2015_Par}) crudely as
$$
    \sigma (f_{A_*}, 1, f_{A_*}) \ge 2\pi a J_0 \cdot (\d_{A_*} - \d^2_{A_*}) \,.
$$
Combining all bounds, we obtain
$$
    (2\pi a)^{-1} (\sigma (A_*) + \sigma (B_*)) \ge \frac{J+J_0+1}{4} > 0 \,,
$$
where $J=\min_{t\ge 0} (J_0 (at) + J_0 (\o at)) = \min_{t\ge 0} (J_0 (t) + J_0 (\o t))$.
Thus, we have proved (\ref{c:euclid'}) and it remains to obtain (\ref{c:euclid''}). 
In the case apply Lemma \ref{l:g-I}, combining with formula (\ref{tmp:04.07.2015_1}) 
and calculations in (\ref{tmp:22.07.2015_1})---(\ref{tmp:22.07.2015_3}).
This completes  the proof.
$\hfill\Box$
\end{proof}

\begin{remark}
    It is well--known that there is a measurable two--coloring of the plane having no monochromatic
    equilateral triangle of an arbitrary side $a>0$, see \cite{EGMRSS}.
    If we try to apply Theorem \ref{t:euclid'} in the case then a Maple calculation gives us
    $$
        \min_{t\ge 0} (2J_0 (t)) + J_0 = 3 J_0 =
        - 1.208278187...
    $$
    It is rather close to the required $-1$.
\end{remark}

There is a series of results, see e.g. \cite{Colouring_book} where the existence of monochromatic triangle with some restrictions on the lengths  of the sides and the angles was obtained for any (non--necessary measurable) coloring.
For example, in \cite{EGMRSS} the authors proved that any monochromatic
triangle with the smallest side $1$ and the angles in the ratio
$1:2:3$, more generally, in the ratio $n:(n+1):(2n+1)$, $1:2n:(2n+1)$ and so on can be found.
Concluding the section we note
that in our Theorem \ref{t:euclid'}
one
does  not need to know any angles but just the ratio of the lengths of an arbitrary  two sides of the triangle.
For example, one can show that for $\o=2$ the minimum in (\ref{c:euclid'})
is greater that $-0.86$ and hence any monochromatic triangle with the ratio of the sides $1:2$ appears.

\bigskip

\noindent{I.D.~Shkredov\\
Steklov Mathematical Institute,\\
ul. Gubkina, 8, Moscow, Russia, 119991}
\\
and
\\
IITP RAS,  \\
Bolshoy Karetny per. 19, Moscow, Russia, 127994\\
{\tt ilya.shkredov@gmail.com}

\end{document}